\newtheorem{theorem}{Theorem}
\newtheorem{proposition}[theorem]{Proposition}
\newcommand{\smashprod}{\wedge}
\newcommand{\Z}{\mathbf{Z}}
\newcommand{\C}{\mathbf{C}}
\newcommand{\CP}{\C P}
\newcommand{\isom}{\approx}
\newcommand{\tensor}{\otimes}
\providecommand{\ker}{}
\renewcommand{\ker}{\mathrm{Ker}\,}
\begin{document} 

\title{The String Topology Loop Coproduct and Cohomology Operations}

\author{Anssi Lahtinen\thanks{The author would like to thank 
the Finnish Cultural Foundation for its support during the time
this note was finished.}}

\date{December 2, 2007 }

\maketitle

The purpose of this note is to explore the relationship between
cohomology operations in a generalized cohomology theory $h^*$ with
products and a string topology loop coproduct
\[
    c : h^*(LM) \to h^*(LM \times LM)
\]
dual to the Chas--Sullivan loop product \cite{CS}. The exact definition 
of $c$ will be given below. In more detail, we are 
interested in giving a description for the failure of commutativity
in the diagram
\begin{equation}
\label{cdiag}
\begin{CD}
  h^*(LM) @>c>> h^*(LM \times LM) \\
  @V{\alpha}VV    @VV{\alpha}V \\
  h^*(LM) @>c>> h^*(LM \times LM)  
\end{CD}
\end{equation}
when $\alpha$ is a cohomology operation of $h^*$, and will obtain a
satisfactory result, phrased in terms of an exotic module structure on
$h^*(LM)$ defined in terms of a characteristic class arising from
$\alpha$, in the case where the operation $\alpha$ preserves addition
and multiplication. Important examples of such operations include the
total Steenrod square
\[
  Sq = 1 + Sq^1 + Sq^2 + \cdots
\]
in ordinary $\Z_2$-cohomology and the Adams operations 
\[
  \psi^k : K(X) \to K(X) 
\]
in $K$-theory. In the case of the the total Steenrod square, our
result parallels an earlier one by Gruher and Salvatore, who in
\cite{GS} described the interaction of the Steenrod squaring
operations with another string topology product in terms of
Stiefel--Whitney classes.

The author expects the results in this note to appear as part of his
PhD thesis, to be written under the direction of Professor R.~L.~Cohen
at Stanford University.  In future work, the author plans to explore
the relationship between cohomology operations and other products
related to string topology, such as the fusion product in twisted
K-theory \cite{FHT3}.

We will now define the the loop coproduct. Suppose $M^d$ is a closed
oriented manifold, let $LM$ be the space of smooth loops in $M$, and
let $LM^{TM}$ denote the Thom space of the bundle $ev^*TM$, where $TM$
stands for the tangent bundle of M and
\[
   ev : LM \to M
\]
is evaluation at $1 \in S^1$. Then according to Cohen and 
Jones~\cite{htopyreal}, the Chas--Sullivan loop product 
\[
     H_*(LM) \tensor H_*(LM) \to H_{*-d}(LM)
\]
arises from a certain map of spaces
\[
     \mu : LM \times LM \to LM^{TM}
\]
and the Thom isomorphism 
\[
     \tilde{H}_*(LM^{TM}) \xrightarrow{\isom} H_{*-d}(LM).
\]
With the above interpretation of the loop product as motivation, we
define the \emph{loop coproduct} of $M$ in a generalized cohomology
theory $h^*$ with products to be the composition
\[
    c : h^*(LM) \xrightarrow[\isom]{Thom} \tilde{h}^*(LM^{TM}) 
            \xrightarrow{\mu^*} h^*(LM \times LM).
\]
Of course, to ensure the existence of the required Thom isomorphism,
we need to assume that our manifold $M$ has been given an orientation
with respect to $h^*$, and we will henceforth do so.  A natural
alternative way to define the loop coproduct would be to make use of
the ring spectrum structure of $LM^{-TM}$ described by Cohen and Jones
and take the loop coproduct to be the composition
  \begin{align*}
      h^*(LM)  \xrightarrow[\isom]{Thom} \tilde{h}^*(LM^{-TM})
              \to \tilde{h}^*(LM^{-TM} \smashprod LM^{-TM}) \\ 
               = {h}^*(LM \times LM ^ {-TM \times -TM} )
              \xrightarrow[\isom]{Thom^{-1}} h^*(LM \times LM).
  \end{align*}
It is easily checked that the two definitions agree.

Suppose now $\alpha$ is a cohomology operation of $h^*$ preserving
sums and products, so that
\[
     \alpha(x+y) = \alpha(x) + \alpha(y)
\]
and 
\[
     \alpha(xy) = \alpha(x)\alpha(y)
\]
for all $x,y \in h^*(X)$ and for any space $X$. We would like to
describe the failure of the diagram \eqref{cdiag} to commute.  Since
the operation $\alpha$ preserves sums, it extends to give an operation
(also denoted by $\alpha$) on the reduced cohomology groups
\[ 
   \tilde{h}^*(X) = \ker (h^*(X) \to h^*(pt)) 
\]
as well as the unreduced ones.  By naturality of $\alpha$ with respect
to maps of spaces, the right hand square in
\begin{equation}
\label{cdiag2}
\begin{CD}
     h^*(LM) @>{Thom}>{\isom}> \tilde{h}^*(LM^{TM}) @>{\mu^*}>> h^*(LM \times LM) \\
    @V{\alpha}VV                       @VV{\alpha}V                 @VV{\alpha}V  \\
     h^*(LM) @>{Thom}>{\isom}> \tilde{h}^*(LM^{TM}) @>{\mu^*}>> h^*(LM \times LM) 
\end{CD}  
\end{equation}
commutes, whence we see that any non-commutativity in the diagram
\eqref{cdiag} arises from the non-commutativity of the left hand
square in \eqref{cdiag2}.

The above observations imply that we should study the failure of
$\alpha$ to commute with the Thom isomorphism.  To do this in the
proper generality, let $X$ be a space, and let $\xi$ be a vector
bundle over $X$ equipped with a Thom class $u_{\xi} \in
\tilde{h}^*(X^{\xi}).$ Then by the Thom isomorphism theorem, we have
\[
    \alpha (u_{\xi}) = \rho_{\alpha}(\xi) \cdot u_{\xi} 
                     \in \tilde{h}^*(X^{\xi})
\]
for some unique class $\rho_{\alpha}(\xi) \in h^*(X)$.  From the
assumption that $\alpha$ preserves products, it now follows easily
that for any $a \in h^*(X)$ we have
\begin{equation}
\label{eqm}
    \alpha (a \cdot u_{\xi}) = \alpha (a) \cdot \alpha (u_{\xi}) 
                       = \alpha (a) \cdot (\rho_{\alpha}(\xi) \cdot u_{\xi})
		       = (\alpha (a)  \rho_{\alpha}(\xi)) \cdot u_{\xi},  
\end{equation}
whence we see that in a sense the class $\rho_{\alpha}(\xi)$
completely describes the failure of the square
\[
\begin{CD}
    h^*(X) @>{Thom}>{\isom}> \tilde{h}^*(X^{\xi}) \\
    @V{\alpha}VV                 @VV{\alpha}V \\
    h^*(X) @>{Thom}>{\isom}> \tilde{h}^*(X^{\xi})  
\end{CD}
\]
to commute.  We find it convenient to express this observation in the
following form.
\begin{proposition}
\label{thomprop}
  Let $\Z[\underline{\alpha}]$ be a polynomial ring over a single variable
$\underline{\alpha}$, and let $\underline{\alpha}$ act on $h^*(X)$ by
\[
  \underline{\alpha} \cdot x = \alpha(x) \rho_{\alpha}(\xi) \quad 
            \text{ for $x \in h^*(X)$}
\]
and on $\tilde{h}^*(X^\xi)$ by 
\begin{equation}
  \label{bmodstr}
  \underline{\alpha}\cdot x = \alpha(x) \quad 
            \text{ for $x \in \tilde{h}^*(X^\xi)$}.
\end{equation}
Then the Thom isomorphism map
\begin{equation*}
    h^*(X) \xrightarrow{\,\cdot u_{\xi}\,} \tilde{h}^*(X^{\xi})\\
\end{equation*}
becomes an isomorphism of $\Z[\underline{\alpha}]$-modules.
\end{proposition}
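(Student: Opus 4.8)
The plan is to note that the statement is essentially a repackaging of equation \eqref{eqm}, so the proof will be short once the module structures are in place. First I would observe that the two prescribed actions genuinely define $\Z[\underline{\alpha}]$-module structures: since $\Z[\underline{\alpha}]$ is the free commutative ring on the one generator $\underline{\alpha}$, a $\Z[\underline{\alpha}]$-module structure on an abelian group is the same thing as a choice of additive endomorphism (the action of $\underline{\alpha}$), with no further relations to check. On $h^*(X)$ this endomorphism is $x \mapsto \alpha(x)\,\rho_{\alpha}(\xi)$ and on $\tilde{h}^*(X^{\xi})$ it is $x \mapsto \alpha(x)$; both are additive because $\alpha$ preserves sums and multiplication by the fixed class $\rho_{\alpha}(\xi)$ is additive.

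Next I would recall from the Thom isomorphism theorem that the map
\[
   \Phi : h^*(X) \xrightarrow{\,\cdot\, u_{\xi}\,} \tilde{h}^*(X^{\xi}), \qquad \Phi(a) = a \cdot u_{\xi},
\]
is already an isomorphism of abelian groups, so all that remains is to verify that $\Phi$ is $\Z[\underline{\alpha}]$-linear, i.e.\ that it intertwines the two endomorphisms just described. For $x \in h^*(X)$ this is the chain of equalities
\[
   \Phi(\underline{\alpha}\cdot x) = \bigl(\alpha(x)\,\rho_{\alpha}(\xi)\bigr)\cdot u_{\xi} = \alpha\bigl(x \cdot u_{\xi}\bigr) = \underline{\alpha}\cdot\Phi(x),
\]
in which the middle equality is precisely \eqref{eqm}. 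A map that is additive and commutes with the action of the generator $\underline{\alpha}$ automatically commutes with the action of every element of $\Z[\underline{\alpha}]$, so $\Phi$ is a homomorphism of $\Z[\underline{\alpha}]$-modules; being in addition bijective, it is an isomorphism of $\Z[\underline{\alpha}]$-modules, since the set-theoretic inverse of a bijective module homomorphism is again a module homomorphism.

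I do not expect any real obstacle here: the substantive computation has already been done in establishing \eqref{eqm} from the assumption that $\alpha$ is multiplicative, and the proposition merely records its consequence in module-theoretic language. The only point deserving a moment's attention is the well-definedness of the two module structures, and that is immediate from the additivity of $\alpha$.
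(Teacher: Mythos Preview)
Your proposal is correct and follows essentially the same approach as the paper: the paper's proof simply says the statement is a reformulation of \eqref{eqm} and notes that the additivity of $\alpha$ is what makes the prescribed actions into genuine $\Z[\underline{\alpha}]$-module structures. You have written out the same argument with more detail, but the content and strategy are identical.
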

\begin{proof}
The statement is essentially just a reformulation of
\eqref{eqm}. Notice that the assumption that $\alpha$ preserves sums
is needed to guarantee that the given definitions make $h^*(X)$ and
$\tilde{h}(X^{\xi})$ into $\Z[\underline{\alpha}]$-modules.
\end{proof}

Since the $\Z[\underline{\alpha}]$-module structure on $h^*(Y)$
obtained by letting $\underline{\alpha}$ act as $\alpha$ as in
\eqref{bmodstr} is clearly natural with respect to maps induced by
maps of spaces, we obtain the following.
\begin{theorem}
\label{lcpthm}
Let $\underline{\alpha}$ act on $h^*(LM)$ by
\[
  \underline{\alpha}\cdot x = \alpha(x) \rho_{\alpha}(ev^* TM) \quad 
            \text{ for $x \in {h}^*(LM)$}
\]
and on $h^*(LM \times LM)$ by 
\[
  \underline{\alpha}\cdot x = \alpha(x) \quad 
            \text{ for $x \in {h}^*(LM \times LM)$}.  
\]
Then the loop coproduct 
\[
     c : h^*(LM) \to h^*(LM \times LM)
\]
is a map of $\Z[\underline{\alpha}]$-modules. \qed
\end{theorem}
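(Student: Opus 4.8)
The plan is to realize $c$ as the composite of the two maps appearing in its definition and to check that each factor is a morphism of $\Z[\underline{\alpha}]$-modules, once the intermediate group $\tilde{h}^*(LM^{TM})$ is given the module structure in which $\underline{\alpha}$ acts as $\alpha$, exactly as in \eqref{bmodstr}. Recall that by definition $c$ is the composite of the Thom isomorphism $h^*(LM) \xrightarrow{\isom} \tilde{h}^*(LM^{TM})$ with $\mu^* : \tilde{h}^*(LM^{TM}) \to h^*(LM \times LM)$, so it suffices to treat these two factors in turn.

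For the first factor, I would apply Proposition~\ref{thomprop} with $X = LM$ and $\xi = ev^*TM$; the required Thom class exists because $M$ has been equipped with an $h^*$-orientation. The proposition then says precisely that the Thom isomorphism is an isomorphism of $\Z[\underline{\alpha}]$-modules, where the source $h^*(LM)$ carries exactly the action $\underline{\alpha}\cdot x = \alpha(x)\rho_{\alpha}(ev^*TM)$ named in the theorem, and the target $\tilde{h}^*(LM^{TM})$ carries the action $\underline{\alpha}\cdot x = \alpha(x)$.

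For the second factor, I would observe that $\mu^*$ is a morphism of $\Z[\underline{\alpha}]$-modules when both $\tilde{h}^*(LM^{TM})$ and $h^*(LM \times LM)$ carry the structure in which $\underline{\alpha}$ acts as $\alpha$. Since $\mu^*$ is additive, being induced by a map of spaces, it is enough to check that it commutes with the action of $\underline{\alpha}$, i.e. that $\mu^*\alpha(x) = \alpha\mu^*(x)$; but this is just naturality of the operation $\alpha$ along $\mu$, the very same naturality already invoked for the right-hand square of \eqref{cdiag2}. Composing the two factors then exhibits $c = \mu^* \circ Thom$ as a morphism of $\Z[\underline{\alpha}]$-modules with the asserted actions on source and target.

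I do not expect a genuine obstacle here, as the theorem is essentially a formal consequence of Proposition~\ref{thomprop} together with the naturality of the operation $\alpha$. The only point demanding a moment's attention is the bookkeeping: one must verify that the $\Z[\underline{\alpha}]$-module structure that Proposition~\ref{thomprop} places on the intermediate group $\tilde{h}^*(LM^{TM})$ — namely the action of $\underline{\alpha}$ via $\alpha$ — is the same structure needed to make $\mu^*$ linear, which indeed it is.
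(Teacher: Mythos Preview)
Your proposal is correct and follows exactly the paper's approach: the paper proves the theorem simply by invoking Proposition~\ref{thomprop} for the Thom isomorphism factor and the naturality of the $\Z[\underline{\alpha}]$-module structure \eqref{bmodstr} under maps of spaces for the $\mu^*$ factor, then composing. Your write-up spells out the same two steps with the same justifications, including the identification of the intermediate module structure on $\tilde{h}^*(LM^{TM})$.
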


In view of Proposition \ref{thomprop} and Theorem \ref{lcpthm}, we
should try to understand the classes $\rho_{\alpha}(\xi)$. The
following proposition summarizes their basic properties.
\begin{proposition}
\label{rhoprop}
The map associating to an $h^*$-oriented vector bundle $\xi$ over a
space $X$ the class $\rho_{\alpha}(\xi) \in h^*(X)$ has the following
properties:
\begin{enumerate}
  \item \label{it1}
        The class $\rho_{\alpha}(\xi) \in h^*(X)$ depends only on 
        the isomorphism class of 
        $\xi$ as an $h^*$\nobreakdash-oriented vector bundle over $X$;
  \item \label{it2} 
        $\rho_{\alpha} (f^*\xi)  = f^* \rho_{\alpha}(\xi) \in h^*(Y)$ when 
        $f$ is a map $X \to Y$;
  \item \label{it3} 
        Given  $h^*$-oriented vector bundles $\xi$ over $X$ and
        $\zeta$  over $Y$ with homogeneous Thom classes $u_\xi$ and
        $u_\zeta$, we have
        \[
             \big[ \rho_\alpha(\xi \times \zeta) \big]_k 
                 = \sum_{i+j=k} (-1)^{j|u_\xi|} \big[\rho_{\alpha}(\xi)\big]_i 
                                      \times \big[\rho_{\alpha}(\zeta)\big]_j
                  \in h^k(X\times Y)
        \]
        where $[ - ]_k$ denotes the degree $k$ part. In particular,
        \[
           \rho_{\alpha}(\xi \times \zeta) 
                =  \rho_{\alpha}(\xi) \times \rho_{\alpha}(\zeta) 
               \in h^*(X\times Y)
        \]
        if $h^*(X\times Y)$ consists of elements of order 2,
        if the degree of $u_\xi$ is even or if $\rho_{\alpha}(\zeta)$ 
        is concentrated in even degrees.
\end{enumerate}
\end{proposition}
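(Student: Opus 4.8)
The plan is to deduce all three properties from the defining equation $\alpha(u_\xi) = \rho_\alpha(\xi)\cdot u_\xi$, using only the naturality and multiplicativity of $\alpha$ and the uniqueness clause in the Thom isomorphism theorem: the latter says that, for an $h^*$-oriented bundle $\xi$ over a space $X$, multiplication by $u_\xi$ is an isomorphism from $h^*(X)$ to $\tilde h^*(X^\xi)$, hence in particular injective, so a class in $h^*(X)$ is determined by its product with $u_\xi$.

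Properties \ref{it1} and \ref{it2} are both instances of one observation. Suppose $g\colon\xi'\to\xi$ is a bundle map covering a map $f\colon X'\to X$, fibrewise an isomorphism, and compatible with the chosen Thom classes, meaning that the induced map of Thom spaces $\bar g\colon (X')^{\xi'}\to X^\xi$ satisfies $\bar g^* u_\xi = u_{\xi'}$. Applying $\alpha$ and using naturality then gives
\[
  \rho_\alpha(\xi')\cdot u_{\xi'} \;=\; \alpha(u_{\xi'}) \;=\; \bar g^*\alpha(u_\xi) \;=\; \bar g^*\bigl(\rho_\alpha(\xi)\cdot u_\xi\bigr).
\]
Because $\bar g$ covers $f$, it intertwines the $h^*(X)$-module structure on $\tilde h^*(X^\xi)$ with the $h^*(X')$-module structure on $\tilde h^*((X')^{\xi'})$ along the ring homomorphism $f^*$ (that is, $\bar g^*(c\cdot v) = (f^*c)\cdot(\bar g^* v)$), so the right-hand side equals $\bigl(f^*\rho_\alpha(\xi)\bigr)\cdot u_{\xi'}$, and injectivity of multiplication by $u_{\xi'}$ forces $\rho_\alpha(\xi') = f^*\rho_\alpha(\xi)$. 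Taking $f = \mathrm{id}_X$ and $g$ an isomorphism of $h^*$-oriented bundles gives \ref{it1}; taking $\xi' = f^*\xi$ with its pulled-back orientation gives \ref{it2}.

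For property \ref{it3} I would first recall the standard facts that the Thom space of $\xi\times\zeta$ over $X\times Y$ is $X^\xi\smashprod Y^\zeta$ and that, under this identification, the Thom class is the external product $u_{\xi\times\zeta} = u_\xi\times u_\zeta$, which is again homogeneous, of degree $|u_\xi| + |u_\zeta|$. Then I would check that $\alpha$ commutes with external products: for the unreduced external product $a\times b = p_X^*(a)\cdot p_Y^*(b)$ this is immediate from naturality and multiplicativity, and the reduced version on $\tilde h^*$ of a smash product follows from it because the collapse map $X^\xi\times Y^\zeta\to X^\xi\smashprod Y^\zeta$ induces a split injection on reduced cohomology compatible with external products. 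Hence
\[
  \alpha(u_{\xi\times\zeta}) \;=\; \alpha(u_\xi)\times\alpha(u_\zeta) \;=\; \bigl(\rho_\alpha(\xi)\cdot u_\xi\bigr)\times\bigl(\rho_\alpha(\zeta)\cdot u_\zeta\bigr).
\]
It then remains to move the module actions past the external product. Writing $\rho_\alpha(\xi) = \sum_i[\rho_\alpha(\xi)]_i$ and $\rho_\alpha(\zeta) = \sum_j[\rho_\alpha(\zeta)]_j$ and invoking the Koszul sign rule for the relative cup and external products underlying the module structures, in the form $(a\cdot v)\times(b\cdot w) = (-1)^{|v|\,|b|}(a\times b)\cdot(v\times w)$, one is led to
\[
  \alpha(u_{\xi\times\zeta}) \;=\; \left(\sum_{i,j}(-1)^{j|u_\xi|}\,[\rho_\alpha(\xi)]_i\times[\rho_\alpha(\zeta)]_j\right)\cdot u_{\xi\times\zeta}.
\]
Comparing with $\alpha(u_{\xi\times\zeta}) = \rho_\alpha(\xi\times\zeta)\cdot u_{\xi\times\zeta}$ and using injectivity identifies $\rho_\alpha(\xi\times\zeta)$ with the bracketed sum; extracting the degree-$k$ part gives the stated formula, and the three special cases follow at once, since the sign $(-1)^{j|u_\xi|}$ is $1$ whenever $2$ annihilates $h^*(X\times Y)$, whenever $|u_\xi|$ is even, and whenever $\rho_\alpha(\zeta)$ is concentrated in even degrees.

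The conceptual content of all this is slight. The one place that needs genuine care is the Koszul sign in the last step, and I expect that tracking it correctly --- together with the attendant bookkeeping of Thom spaces, Thom classes, and the relative cup and external products of a product bundle, most cleanly carried out in terms of disk-and-sphere-bundle pairs --- will be the main obstacle; it is precisely this sign that prevents the naive identity $\rho_\alpha(\xi\times\zeta) = \rho_\alpha(\xi)\times\rho_\alpha(\zeta)$ from holding in general.
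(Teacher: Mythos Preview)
Your proposal is correct and follows essentially the same route as the paper's proof: parts \ref{it1} and \ref{it2} are dispatched by naturality of $\alpha$ together with uniqueness in the Thom isomorphism (the paper simply calls them ``trivial''), and part \ref{it3} is obtained by the same computation---apply $\alpha$ to $u_{\xi\times\zeta}=u_\xi\smashprod u_\zeta$, use multiplicativity to split the factors, insert $\rho_\alpha(\xi)\cdot u_\xi$ and $\rho_\alpha(\zeta)\cdot u_\zeta$, and then invoke the Koszul sign when commuting $[\rho_\alpha(\zeta)]_j$ past $u_\xi$. Your version is more explicit about why $\alpha$ respects the smash-external product and about the form of the sign rule, but the argument is the paper's own.
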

\begin{proof}
Parts \ref{it1} and  \ref{it2} are trivial, and part \ref{it3} follows
from the computation
\begin{align*}
    \big[\rho_\alpha(\xi \times \zeta)\big]_k \cdot (u_\xi \smashprod u_\zeta) 
 &= \big[\alpha(u_{\xi} \smashprod u_{\zeta})  \big]_{k + |u_\xi|+ |u_\zeta|}\\
 &= \big[ \alpha(u_{\xi}) \smashprod \alpha(u_{\zeta}) \big]_{k 
           + |u_\xi|+ |u_\zeta|}\\
 &= \sum_{i+j=k}  \big( \big[ \rho_\alpha(\xi) \big]_i \cdot u_\xi \big)
    \smashprod  \big( \big[ \rho_\alpha(\zeta)\big]_j \cdot u_\zeta \big) \\
 &= \sum_{i+j=k} (-1)^{j|u_\xi|} \big( \big[ \rho_\alpha(\xi) \big]_i 
          \times \big[ \rho_\alpha(\zeta)\big]_j\big)
           \cdot (u_\xi \smashprod u_\zeta). \qedhere
\end{align*}
\end{proof}
Notice that parts \ref{it1} and \ref{it2} of the preceding proposition
together state that $\rho_{\alpha}$ is a characteristic class of 
$h^*$-oriented vector bundles, and that \ref{it2} and \ref{it3} combine 
to prove the formula
\begin{equation}
  \label{sumformula}
  \big[\rho_{\alpha}(\xi \oplus \zeta)\big]_k = 
    \sum_{i+j=k} (-1)^{j|u_\xi|}\big[\rho_{\alpha}(\xi)\big]_i 
                 \big[\rho_{\alpha}(\zeta)\big]_j
           \in h^k(X)
\end{equation}
when $\xi$ and $\zeta$ are $h^*$-oriented vector bundles over $X$
(with homogeneous Thom classes).  Also observe that in the case where
$h^*$ is ordinary cohomology with $\Z_2$-coefficients and $\alpha$ is
the total Steenrod square
\[
   Sq = 1 + Sq^1 + Sq^2 + \cdots,
\]
the class $\rho_{\alpha}(\xi)$ is simply the total Stiefel--Whitney
class $w(\xi) \in H^*(X;\,\Z_2)$. In the case where $h^*$ is complex
$K$-theory and $\alpha$ is the $k$-th Adams operation $\psi^k$, the
class $\rho_{\alpha}(\xi)$ is the cannibalistic characteristic class
$\rho_k(\xi) \in K(X)$ considered by Adams~\cite{JX2} (whence our
notation $\rho_{\alpha}$).  The following well-known result aids the
computation of the classes $\rho_{\alpha}$ in this case; together with
the splitting principle of complex vector bundles and the sum formula
\eqref{sumformula}, it in principle completely determines the classes
$\rho_k(\xi) \in K(X)$ for complex vector bundles $\xi$.
\begin{proposition}
Suppose $\lambda$ is a complex line bundle over a space $X$. Then 
\[ 
    \rho_k(\lambda) = 1 + \lambda + \cdots + \lambda^{k-1} \in K(X).
\]
\end{proposition}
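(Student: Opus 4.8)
The plan is to read off $\rho_k(\lambda)$ from the explicit form of the complex $K$-theory Thom class of a line bundle, and then to remove the resulting ambiguity by passing to the universal example. Write $\pi \colon D\lambda \to X$ for the disk bundle of $\lambda$, so that $X^\lambda = D\lambda / S\lambda$ and $\tilde K(X^\lambda) = K(D\lambda, S\lambda)$. Recall the Atiyah--Bott--Shapiro description of the Thom class $u_\lambda$: it is the relative class represented by the two-term Koszul complex $\bigl[\,\underline{\C} \xrightarrow{\wedge v}\pi^*\lambda\,\bigr]$, where $v$ is the tautological section of $\pi^*\lambda$ (this complex is exact over $S\lambda$, so it does represent a class in $K(D\lambda, S\lambda)$). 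Under the natural map $j \colon K(D\lambda, S\lambda)\to K(D\lambda)$ appearing in the long exact sequence of the pair, the class represented by a complex of bundles on $D\lambda$ which is exact over $S\lambda$ goes to the alternating sum of the bundles in the complex; hence, identifying $K(D\lambda)$ with $K(X)$ by means of the zero section, we obtain the single input we need:
\[
  j(u_\lambda) = 1 - \lambda \in K(X).
\]

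Next I would apply $j$ to the defining relation $\psi^k(u_\lambda) = \rho_k(\lambda)\cdot u_\lambda$. The map $j$ is a homomorphism of $K(X)$-modules (compatibility of the relative cup product with the cup product) and commutes with $\psi^k$, as both are natural; so applying it yields $\psi^k(1-\lambda) = \rho_k(\lambda)\cdot(1-\lambda)$ in $K(X)$. Since $\psi^k$ is a ring homomorphism with $\psi^k(\lambda) = \lambda^k$, the left-hand side equals $1 - \lambda^k = (1-\lambda)\bigl(1 + \lambda + \cdots + \lambda^{k-1}\bigr)$, and therefore
\[
  (1-\lambda)\bigl(1 + \lambda + \cdots + \lambda^{k-1}\bigr) = (1-\lambda)\cdot\rho_k(\lambda) \quad\text{in } K(X).
\]

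This is not yet the claim, because $1 - \lambda$ need not be a non-zero-divisor for a given $X$ and $\lambda$. To finish, I would specialize to the universal line bundle $\gamma$ over $\CP^\infty$: there $K(\CP^\infty)$ is the power series ring $\Z[[\gamma-1]]$, an integral domain, so $1-\gamma$ is a non-zero-divisor and may be cancelled, giving $\rho_k(\gamma) = 1 + \gamma + \cdots + \gamma^{k-1}$. For an arbitrary complex line bundle $\lambda$ over a space $X$, classified by a map $f \colon X \to \CP^\infty$, parts \ref{it1} and \ref{it2} of Proposition \ref{rhoprop} then give $\rho_k(\lambda) = f^*\rho_k(\gamma) = 1 + \lambda + \cdots + \lambda^{k-1}$, as wanted. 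The one step that genuinely calls for care is the identification $j(u_\lambda) = 1-\lambda$, i.e. recalling the correct complex $K$-theory Thom class (and in particular getting $1-\lambda$ rather than $1-\bar\lambda$); once that is in hand the rest is formal, with the detour through $\CP^\infty$ doing precisely the job of upgrading the equality-up-to-a-$(1-\lambda)$-multiple into a genuine equality.
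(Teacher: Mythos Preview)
Your argument is correct, and its heart is the same computation the paper makes: factor $\psi^k$ applied to the Thom class of the universal line bundle as $(1+\eta+\cdots+\eta^{k-1})$ times that Thom class, then pull back by naturality. The packaging differs. The paper works entirely inside the Thom space: it invokes the classical homeomorphism $(\CP^\infty)^\eta \homeom \CP^\infty$, under which the Thom class becomes $\eta-1 \in \tilde K(\CP^\infty)$ and the $K(\CP^\infty)$-module structure on $\tilde K\bigl((\CP^\infty)^\eta\bigr)$ becomes the evident one, so the defining equation $\psi^k(u_\eta)=\rho_k(\eta)\cdot u_\eta$ can be read off directly in $\tilde K(\CP^\infty)$ with no intermediate non-injective map and no cancellation argument. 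Your route through the Atiyah--Bott--Shapiro description and the forgetful map $j\colon K(D\lambda,S\lambda)\to K(D\lambda)$ is more systematic and would adapt more readily to other bundles or theories, but the price is that $j$ can kill information, which is exactly why you are forced to visit $\CP^\infty$ and invoke the integral-domain structure of $\Z[[\gamma-1]]$ to cancel $1-\gamma$; the paper's identification avoids that detour. (A minor point: your $j(u_\lambda)=1-\lambda$ and the paper's Thom class $\eta-1$ differ by a sign, a harmless orientation convention that cancels in the defining relation for $\rho_k$.)
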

\begin{proof}
It is enough to consider the case where $\lambda$ is the universal
line bundle $\eta$ over $\CP^\infty$. However, in this case the Thom
space $(\CP^\infty)^{\eta}$ is homeomorphic to $\CP^\infty$, with the
Thom class corresponding to the class $\eta - 1 \in
\tilde{K}(\CP^\infty)$ and the module structure of
$\tilde{K}((\CP^\infty)^{\eta})$ over $K(\CP^\infty)$ corresponding to
the usual module structure of $\tilde{K}(\CP^\infty)$ over
$K(\CP^\infty)$.  Now the claim follows from the computation
\[
    \psi^k(\eta - 1 ) = \eta^k - 1 = (1+ \eta+ \cdots + \eta^{k-1})(\eta - 1).
 \qedhere
\]
\end{proof}

\bibliography{loop-coproduct} 

\begin{thebibliography}{1}

\bibitem{JX2}
J.~F. Adams.
\newblock On the groups {$J(X)$} {II}.
\newblock {\em Topology}, 3:137--171, 1965.

\bibitem{CS}
Moira Chas and Dennis Sullivan.
\newblock String topology.
\newblock arXiv:math/9911159, 1999.

\bibitem{htopyreal}
Ralph~L. Cohen and John D.~S. Jones.
\newblock A homotopy theoretic realization of string topology.
\newblock {\em Math. Ann.}, 324(4):773--798, 2002.

\bibitem{FHT3}
Daniel Freed, Michael~J. Hopkins, and Constantin Teleman.
\newblock Loop groups and twisted {$K$}-theory {III}.
\newblock arXiv:math/0312155, 2003.

\bibitem{GS}
Kate Gruher and Paolo Salvatore.
\newblock Generalized string topology operations.
\newblock arXiv:math/0602210, 2006.

\end{thebibliography}
\bibliographystyle{plain}

\end{document}